\renewcommand{\baselinestretch}{1.5}
\theoremstyle{plain}
\newtheorem{theorem}{Theorem}[section]
\newtheorem{lemma}[theorem]{Lemma}
\theoremstyle{definition}
\newtheorem{conjecture}[theorem]{Conjecture}
\newcommand{\seps}{\ensuremath{\mathbb{S}}}
\newcommand{\half}{\ensuremath{\protect\tfrac{1}{2}}}
\newcommand{\ceil}[1]{\ensuremath{\protect\lceil#1\rceil}}
\newcommand{\CEIL}[1]{\ensuremath{\protect\left\lceil#1\right\rceil}}
\newcommand{\thmlabel}[1]{\label{thm:#1}}
\newcommand{\thmref}[1]{Theorem~\ref{thm:#1}}
\newcommand{\twothmref}[2]{Theorems~\ref{thm:#1} and \ref{thm:#2}}
\newcommand{\lemlabel}[1]{\label{lem:#1}}
\newcommand{\lemref}[1]{Lemma~\ref{lem:#1}}
\newcommand{\eqnlabel}[1]{\label{eqn:#1}}
\newcommand{\eqnref}[1]{\eqref{eqn:#1}}
\newcommand{\twoeqnref}[2]{\eqref{eqn:#1} and \eqref{eqn:#2}}
\newcommand{\figlabel}[1]{\label{fig:#1}}
\newcommand{\figref}[1]{Figure~\ref{fig:#1}}
\newcommand{\seclabel}[1]{\label{sec:#1}}
\newcommand{\secref}[1]{Section~\ref{sec:#1}}
\newcommand{\conjlabel}[1]{\label{con:#1}}
\newcommand{\conjref}[1]{Conjecture~\ref{con:#1}}    
\begin{document}

\title{Contractibility and the Hadwiger Conjecture}

\author{David~R.~Wood}
\address{\newline Department of Mathematics and Statistics
\newline The University of Melbourne
\newline Melbourne, Australia}
\email{D.Wood@ms.unimelb.edu.au}

\thanks{\textbf{MSC Classification}: graph minors 05C83, coloring of graphs and hypergraphs  
05C15.}

\date{\today}

\begin{abstract}
Consider the following relaxation of the Hadwiger Conjecture: For each $t$ there exists $N_t$ such that every graph with no $K_t$-minor admits a vertex partition into $\ceil{\alpha t+\beta}$ parts, such that each component of the subgraph induced by each part has at most $N_t$ vertices. The Hadwiger Conjecture corresponds to the case  $\alpha=1$, $\beta=-1$ and $N_t=1$. Kawarabayashi and Mohar [\emph{J.~Combin.\ Theory Ser.~B}, 2007] proved this relaxation with $\alpha=\frac{31}{2}$ and $\beta=0$ (and $N_t$ a huge function of $t$). This paper proves this relaxation with $\alpha=\frac{7}{2}$ and $\beta=-\frac{3}{2}$. The main ingredients in the proof are: (1) a list colouring argument due to Kawarabayashi and Mohar, (2) a recent result of Norine and Thomas that says that every sufficiently large $(t+1)$-connected graph contains a $K_t$-minor, and (3) a new sufficient condition for a graph to have a set of edges whose contraction increases the connectivity.
\end{abstract}

\maketitle

\newpage
\section{Introduction}
\seclabel{Intro}


In 1943, \citet{Hadwiger43} made the following conjecture, which is widely considered to be one of the most important open problems in graph theory; see \citep{Toft-HadwigerSurvey96} for a survey\footnote{All graphs in this paper are undirected, simple and finite. Let $G$ be a graph. The vertex set and edge set of $G$ are denoted by $V(G)$ and $E(G)$. For $v\in V(G)$, let $N_G(v):=\{w\in V(G):vw\in E(G)\}$. If $X\subseteq V(G)$ then $G[X]$ denotes the subgraph induced by $X$. 
If $vw$ is an edge of $G$ then $G/vw$ is the graph obtained from $G$ by contracting $vw$; that is, the edge $vw$ is deleted and the vertices $v$ and $w$ are identified. A \emph{minor} of $G$ is a graph that can be obtained from a subgraph of $G$ by contracting edges. 
A \emph{$k$-colouring} of $G$ is a function that assigns one of at most $k$ colours to each vertex of $G$, such that  adjacent vertices receive distinct colours. $G$ is \emph{$k$-colourable} if $G$ admits a $k$-colouring. }.


\medskip\noindent\textbf{Hadwiger Conjecture.}
Every graph with no $K_t$-minor is $(t-1)$-colourable.
\smallskip


The Hadwiger Conjecture is true\footnote{If $G$ has no $K_1$-minor then $V(G)=\emptyset$ and $G$ is $0$-colourable. If $G$ has no $K_2$-minor then $E(G)=\emptyset$ and $G$ is $1$-colourable. If $G$ has no $K_3$-minor then $G$ is a forest, which is $2$-colourable.  \citet{Hadwiger43} and \citet{Dirac52} independently proved that if $G$ has no $K_4$-minor (so-called \emph{series-parallel} graphs) then $G$ is $3$-colourable. The Hadwiger Conjecture with $t=5$ implies the Four-Colour Theorem, since planar graphs contain  no $K_5$-minor. In fact, \citet{Wagner37} proved that the Hadwiger Conjecture with $t=5$ is equivalent to the Four-Colour Theorem, and is therefore true \citep{Gonthier-NAMS08,RSST97}.  \citet{RST-Comb93} proved that the Hadwiger Conjecture with $t=6$ also is a corollary of the Four-Colour Theorem.} for $t\leq 6$. \citet{Kostochka82,Kostochka84} and \citet{Thomason84,Thomason01} independently proved that for some constant $c$, every graph $G$ with no $K_t$-minor has a vertex of degree at most $ct\sqrt{\log t}$ (and this bound is best possible). It follows that $G$ is $ct\sqrt{\log t}$-colourable. This is the best known such upper bound. In particular, the following conjecture is unsolved:


\smallskip\noindent\textbf{Weak Hadwiger Conjecture.}
For some constant $c$, 
every graph with no $K_t$-minor is $ct$-colourable.
\smallskip

This conjecture motivated \citet{KawaMohar-JCTB07} to prove the following relaxation; see \citep{Kawarabayashi-CPC08} for a recent extension to graphs with no odd $K_t$-minor.

\begin{theorem}[\citet{KawaMohar-JCTB07}]
\thmlabel{RelaxedHadwiger} 
For each $t\in\mathbb{Z}^+$ there exists $N_t\in\mathbb{Z}^+$ such that every graph with no $K_t$-minor admits a vertex partition into $\CEIL{\frac{31}{2}t}$ parts, and each connected component of the subgraph induced by each part has at most $N_t$ vertices.
\end{theorem}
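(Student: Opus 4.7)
The plan is to prove \thmref{RelaxedHadwiger} by strong induction on $|V(G)|$, treating the desired partition into $k := \ceil{\frac{31}{2}t}$ parts as a kind of $k$-colouring, with the constraint that every monochromatic component has at most $N_t$ vertices. A list-colouring formulation is central: when extending a partial partition to a new vertex $v$, the admissible colours are those $i$ for which placing $v$ in part $V_i$ does not create a monochromatic component exceeding size $N_t$, giving each $v$ a list $L(v)\subseteq\{1,\ldots,k\}$.

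I would proceed by case analysis on the minimum degree $\delta(G)$. In the low-degree case, if $G$ has a vertex $v$ with $\deg_G(v) \leq k-1$, delete $v$, apply the inductive hypothesis to $G-v$, and extend: since $v$ has neighbours in at most $k-1$ parts, some part $V_i$ contains no neighbour of $v$, and placing $v$ in $V_i$ creates only a new singleton monochromatic component, safely of size $1 \leq N_t$.

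In the high-degree case $\delta(G) \geq k$, I would use a Mader-type theorem to locate a highly connected subgraph $H$ in $G$. The no-$K_t$-minor hypothesis forces $|V(H)|$ to be bounded by some $f(t)$, since sufficiently large highly connected graphs contain $K_t$-minors. With $N_t \geq f(t)$, apply the inductive hypothesis to $G - V(H)$ (which also has no $K_t$-minor) to obtain a partition, and then list-colour $V(H)$ into the existing $k$ parts: each $v \in V(H)$ has a list of admissible colours, and the constant $\frac{31}{2}$ must be arranged precisely so that these lists are always large enough to permit a valid extension.

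The main obstacle is this final list-colouring step in the high-degree case, specifically guaranteeing non-empty admissible lists for every $v \in V(H)$ throughout the reinsertion process. This requires quantitative control over how many parts are ``blocked'' at $v$ by nearby large monochromatic components in $G - V(H)$. The constant $\frac{31}{2}$ should emerge from a careful trade-off between the density threshold needed for Mader's theorem to kick in, the resulting bound $f(t)$ on $|V(H)|$, and the list-colouring inequality that guarantees at least one admissible colour at each vertex of $H$.
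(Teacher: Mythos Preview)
Note first that \thmref{RelaxedHadwiger} is not proved in this paper; it is quoted from \citet{KawaMohar-JCTB07}. The paper proves the stronger \thmref{Main} via \lemref{ListColouring}, and states that the list-colouring framework there is precisely the one introduced by Kawarabayashi and Mohar, so that framework is the relevant point of comparison.

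Your low-degree case is fine and matches Case~II of \lemref{ListColouring}. The high-degree case has a genuine gap. After you delete the bounded-size highly connected subgraph $H$ and colour $G-V(H)$ by induction, a vertex $v\in V(H)$ may have arbitrarily many neighbours in $G-V(H)$: Mader's theorem gives a \emph{lower} bound on degrees in $H$, not an upper bound on degrees in $G$. Hence for every colour $i$ the $V_i$-components of $G-V(H)$ adjacent to $v$ can together already have more than $N_t-|V(H)|$ vertices, so the admissible list at $v$ may be empty. No choice of the constant in front of $t$, and no choice of $N_t$, repairs this, because the number of blocked colours at $v$ is not governed by $k$ or by $|V(H)|$ at all; the ``trade-off'' you hope for does not exist in this formulation.

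The actual argument avoids any delete--reinsert step. One strengthens the induction to carry a precoloured set $Z$ with $|Z|\le 2t-1$, together with the invariant (C3) that each $v\in Z$ receives a colour different from all its neighbours outside $Z$. If $G$ has a small $Z$-good separation $\{A,B\}$, one colours $G[B\cup Z]$ first, then colours $G[A]$ with the separator absorbed into the precoloured set; invariant (C3) prevents monochromatic components on the two sides from merging across the separator, so the component-size bound survives the gluing. If no $Z$-good $t$-separation exists, then (after contracting a bounded number of edges, via the contractibility result) $G$ itself is highly connected, and the connectivity theorem forces $|V(G)|$ to be small, which is handled directly as a base case. The constant in front of $t$ comes from balancing the bound on $|Z|$ against the degree threshold needed for this last step, not from a reinsertion count.
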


With $N_t=1$ the vertex partition in \thmref{RelaxedHadwiger} is a colouring. So 
\thmref{RelaxedHadwiger} is a relaxation of the Weak Hadwiger Conjecture. It would be interesting to improve the bound of  $\frac{31}{2}t$  in \thmref{RelaxedHadwiger}. Indeed,  \citet{KawaMohar-JCTB07} write, 
\begin{quote}
``The $\frac{31}{2}t$ bound can be improved slightly by fine-tuning parts of the proof in \citep{BKMM}. 
However, new methods would be needed to go below $10t$.''
\end{quote}
The main contribution of this paper is to improve  $\frac{31}{2}$  in \thmref{RelaxedHadwiger} to $\frac{7}{2}$. 

\begin{theorem}
\thmlabel{Main}
For each $t\in\mathbb{Z}^+$ there exists $N_t\in\mathbb{Z}^+$ such that every graph with no $K_t$-minor admits a vertex partition into $\CEIL{\frac{7t-3}{2}}$ parts, and each connected component of the subgraph induced by each part has at most $N_t$ vertices.
\end{theorem}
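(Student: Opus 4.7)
The plan is to combine the three ingredients advertised in the abstract. First I would fix $N_t$ to be at least the Norine–Thomas threshold, so that every $(t+1)$-connected graph on more than $N_t$ vertices contains a $K_t$-minor. An immediate consequence is that any $K_t$-minor-free graph on more than $N_t$ vertices admits a separator of size at most $t$, and so does any of its sufficiently large subgraphs. This is the base observation that makes a recursion along small separators possible.

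The main argument is an induction on $|V(G)|$. In the small case $|V(G)| \le N_t$ a single part suffices. Otherwise I would apply the new contractibility lemma of the paper to obtain an edge set $F$ whose contraction strictly increases the connectivity of $G$ while keeping the resulting bags of bounded size (in terms of $t$). Iterating, one reaches either a graph of bounded size (on which the theorem is trivial) or a graph of connectivity at most $t$; the latter possesses a separator of size at most $t$ which, pulled back through the contractions, yields a separating set in $G$ whose induced bags have controlled size. In both cases $G$ decomposes into smaller pieces glued along controlled separators, and the inductive hypothesis applies to each piece.

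Reconciling colourings across the separators is the role of the list-colouring argument of Kawarabayashi and Mohar: each recursive subproblem is presented with a list of $\lceil (7t-3)/2 \rceil$ colours, and list reassignment absorbs disagreements on the separator without increasing the total colour count. Verifying that the pulled-back bags of contracted vertices form components of size at most $N_t$ inside their colour class is the point where $N_t$ from Norine–Thomas is consumed. The improvement from $\tfrac{31}{2}t$ to $\tfrac{7t-3}{2}$ should trace back to the contractibility lemma: by forcing much tighter structural control around each separator than was available in \cite{KawaMohar-JCTB07}, one can shrink the list length needed at each reconciliation step by a factor of roughly four.

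The main obstacle I expect is the contractibility iteration itself: one has to check that the hypotheses of the new lemma persist after each contraction, that the bags remain bounded in terms of $t$ alone (so $N_t$ depends only on $t$ and does not grow with $|V(G)|$), and that the relevant minor retains the $K_t$-minor-free property at every stage, so that Norine–Thomas can be invoked to terminate the recursion. A secondary difficulty is the numerical bookkeeping: threading the list-colouring argument through the refined decomposition so that the colour count lands on exactly $\lceil (7t-3)/2 \rceil$ rather than a slightly larger bound will likely require some fine-tuning of the list sizes at each recursive call.
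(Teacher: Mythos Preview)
Your proposal misreads the role of the contractibility result. In the paper it is not iterated to decompose the graph; it is invoked exactly once, in the terminal case of the induction, to derive a contradiction. The actual induction (\lemref{ListColouring}) carries a set $Z$ of at most $2t-1$ precoloured vertices and branches as follows: (i) if $|V(G)|\le N_t+2t-1$, colour directly; (ii) if some $v\notin Z$ has degree below $\tfrac{7t-3}{2}$, delete $v$, recurse, and colour $v$ greedily from its list; (iii) if $G$ has a $Z$-good $t$-separation, recurse on each side with the separator absorbed into the precoloured set; (iv) otherwise every vertex outside $Z$ has degree at least $\tfrac{7t-3}{2}$ and every $t$-separation is $Z$-bad --- this is exactly the hypothesis of \thmref{Hope} with $k=t+1$, which produces a $(t+1)$-connected minor on at least $|V(G)|-|Z|\ge N_t$ vertices, and Norine--Thomas then yields a $K_t$-minor, contradicting the assumption on $G$. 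No colouring is ever performed on a contracted graph, and nothing is ``pulled back''.

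In particular the constant $\tfrac{7t-3}{2}$ is not the outcome of tuning list sizes across separators; it is forced by the degree hypothesis $\tfrac{3}{2}k+|Z|-2$ of \thmref{Hope} with $k=t+1$ and $|Z|\le 2t-1$, since $\tfrac{3}{2}(t+1)+(2t-1)-2=\tfrac{7t-3}{2}$. Your scheme of iterating contractions and extracting a separator from the contracted graph does not match what \thmref{Hope} provides: its hypothesis already requires every $(k-1)$-separation to be $Z$-bad, and its conclusion is a $k$-connected minor, not a decomposition. With $Z=\emptyset$ the hypothesis degenerates to ``$G$ is $k$-connected'', so the theorem cannot bootstrap connectivity from nothing; the nonempty $Z$ that makes it bite arises only through the list-colouring recursion and the low-degree case, both of which your outline omits.
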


There are three main ingredients to the proof of \thmref{Main}. The first ingredient is a list colouring argument due to  \citet{KawaMohar-JCTB07}, which is described in  \secref{ListColouring}. The second ingredient is a sufficient condition for a graph to have a set of edges whose contraction increases the connectivity. This condition generalises previous results by \citet{Mader-DM88}, and is presented in \secref{Contractibility}. The third ingredient, the ``new methods'' alluded to above, is the following recent result by \citet{NorineThomas}.

\begin{theorem}[\citet{NorineThomas}]
\thmlabel{NT}
For each $t\in\mathbb{Z}^+$ there exists $N_t\in\mathbb{Z}^+$ such that every $(t+1)$-connected graph with at least $N_t$ vertices has a $K_t$-minor.
\end{theorem}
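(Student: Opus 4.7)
The natural route is through the Robertson--Seymour structure theorem for $K_t$-minor-free graphs, which decomposes any such graph $G$ as a clique-sum of ``$\alpha$-almost-embeddable'' pieces, each embedded in a surface of Euler genus at most $g=g(t)$ after deletion of at most $\alpha=\alpha(t)$ apex vertices and with at most $\alpha$ vortices of depth at most $\alpha$. The plan is: (1)~show that $(t+1)$-connectivity forces the clique-sum decomposition to be essentially trivial, so that almost all of $G$ sits inside a single almost-embeddable bag $H$; (2)~show that if $|V(H)|$ is enormous then $H$ contains a $K_t$-minor, contradicting the hypothesis that $G$ has none.

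Step~(1) rests on the fact that every clique used in a clique-sum has size bounded in~$t$, while $(t+1)$-connectivity forbids separators of size at most~$t$. A trimming argument on the decomposition tree should collapse all bags but one: if two distinct bags each contained many vertices, their common clique separator would have size at most that of the summing clique, contradicting $(t+1)$-connectivity once $N_t$ is large enough that the bags themselves exceed the clique size. For step~(2), after removing the at most $\alpha$ apex vertices one obtains a graph $H'$ drawn on a surface of bounded genus with vortices of bounded depth. If $|V(H')|$ is sufficiently large, then $H'$ contains a very large grid-like minor by the grid-minor theorem for graphs embedded on a fixed surface. One then uses Menger's theorem together with the inherited $(t+1)$-connectivity of $H$ to attach the apex vertices as branch vertices and to route the remaining branch sets through widely separated regions of the grid, producing a $K_t$-model.

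The main obstacle is step~(2): actually assembling the $K_t$-minor. Bounded-genus graphs alone admit clique minors of size only $O(\sqrt{g(t)})$, which is far less than~$t$, so the complete minor must be built by exploiting the $\alpha$ apex vertices as branch sets and by using the surface's handles plus a large grid to supply the remaining branch sets. Vortices compound the difficulty, since they can hide complicated structure on a small boundary that could interfere with any chosen branch set; these must be absorbed into the model or routed around using the internally disjoint paths guaranteed by high connectivity. Orchestrating grids, apices and vortices simultaneously inside a $(t+1)$-connected host, and extracting a genuine $K_t$-minor rather than a smaller one, is the crux of the argument, and is where genuinely new ideas beyond the structure theorem itself are required.
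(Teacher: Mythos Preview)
The paper does not prove this theorem. It is quoted as a result of Norine and Thomas (reference \cite{NorineThomas}, announced at the Banff Graph Minors Workshop) and is used as a black box in the proof of \lemref{ListColouring}. There is therefore no ``paper's own proof'' to compare your proposal against.

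That said, what you have written is an outline rather than a proof, and you acknowledge as much in your final paragraph. Two concrete issues are worth flagging. First, your step~(1) is not as clean as you suggest: the Robertson--Seymour structure theorem gives clique-sums along sets of size bounded by some $h(t)$, but $h(t)$ may well exceed $t$, so $(t+1)$-connectivity does not by itself forbid nontrivial clique-sums; moreover, the adhesion sets need not be cliques in $G$, so you cannot simply read off a $K_t$-subgraph when the adhesion is large. Collapsing the tree-decomposition to a single bag requires a more careful argument than ``the separator is too small.'' Second, and more seriously, your step~(2) is precisely the heart of the Norine--Thomas (and earlier B{\"o}hme--Kawarabayashi--Maharry--Mohar) work, and you correctly identify that building a $K_t$-minor from apices, bounded-genus surface structure, and vortices is where the real content lies. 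Your proposal names the difficulty but does not resolve it; as written it is a plausible plan of attack, not a proof.
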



\section{List Colouring}
\seclabel{ListColouring}

A key tool in the proofs of \twothmref{RelaxedHadwiger}{Main} is the notion of list colouring, independently introduced by \citet{Vizing76} and \citet{ERT80}. A \emph{list-assignment} of a graph $G$ is a function $L$ that assigns to each vertex $v$ of $G$ a set $L(v)$ of colours. $G$ is \emph{$L$-colourable} if there is a colouring of $G$ such that the colour assigned to each vertex $v$ is in $L(v)$. $G$ is \emph{$k$-choosable} if $G$ is $L$-colourable for every list-assignment $L$ with $|L(v)|\geq k$ for each vertex $v$ of $G$. If $G$ is $k$-choosable then $G$ is also $k$-colourable---just use the same set of $k$ colours for each vertex. See \citep{Woodall-ListColouringSurvey} for a survey on list colouring. 

As well as being of independent interest, list colourings enable inductive proofs about ordinary colourings that might be troublesome without using lists. Most notable, is the proof by \citet{Thomassen-JCTB94} that every planar graph is $5$-choosable. This proof, unlike most proofs of the 5-colourability of planar graphs, does not use the fact that every planar graph has a vertex of degree at most $5$. Given that there are graphs with no $K_t$-minor and minimum degree $\Omega(t\sqrt{\log t})$, this suggests that list colourings might provide an approach for attacking the Hadwiger Conjecture. List colourings also provide a way to handle small separators---first colour one side of the separator, and then colour the second side with the vertices of the separator precoloured. This idea is central in the proofs of \twothmref{RelaxedHadwiger}{Main}. 

We need the following definitions. Let $G$ be a graph. For $A,B\subseteq V(G)$, the pair $\{A,B\}$ is a \emph{separation} of $G$ if $G=G[A]\cup G[B]$ and $A-B\neq\emptyset$ and $B-A\neq\emptyset$. In particular, there is no edge between $A-B$ and $B-A$. The set $A\cap B$ is called a \emph{separator}, and each of $A-B$ and $B-A$ are called \emph{fragments}. If $|A\cap B|\leq t$ then $\{A,B\}$ is called a \emph{$t$-separation} and $A\cap B$ is called a \emph{$t$-separator}. By Menger's Theorem, $G$ is $t$-connected if and only if $G$ has no $(t-1)$-separation and $|V(G)|\geq t+1$. For  $Z\subseteq V(G)$, a separation $\{A,B\}$ of $G$ is \emph{$Z$-good} if $\{A-Z,B-Z\}$ is also a separation of $G-Z$; otherwise it is \emph{$Z$-bad}. Observe that $\{A,B\}$ is \emph{$Z$-bad} if and only if $A-B\subseteq Z$ or $B-A\subseteq Z$.


\thmref{Main} follows from the next lemma (with $Z=\emptyset$ and $L(v)=\{1,\dots,\CEIL{\frac{7t-3}{2}}\}$ for each $v\in V(G)$).

\begin{lemma}
\lemlabel{ListColouring}
Let $G$ be a graph containing no $K_t$-minor. 
Let $Z\subseteq V(G)$ with $|Z|\leq2t-1$. 
Let $L$ be a list assignment of $G$ such that:
\begin{itemize}
\item $|L(v)|=1$ for each vertex $v\in Z$ (said to be ``precoloured''), 
\item $|L(w)|\geq\frac{7t-3}{2}$ for each vertex $w\in V(G)-Z$.
\end{itemize}
Then there is a function $f$ such that:
\begin{enumerate}
\item[(C1)] $f(v)\in L(v)$ for each vertex $v\in V(G)$,
\item[(C2)] for each colour $i$, if $V_i:=\{v\in V(G):f(v)=i\}$ then each component of $G[V_i]$ has at most $N_t+2t-1$ vertices (where $N_t$ comes from \thmref{NT}), and
\item[(C3)] $f(v)\neq f(w)$ for all $v\in Z$ and $w\in N_G(v)-Z$.
\end{enumerate}
\end{lemma}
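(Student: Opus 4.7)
The plan is to induct on $|V(G)|$. For the base case $|V(G)|\le N_t+2t-1$, assign each $v\in Z$ its forced colour $L(v)$ and greedily colour each $w\in V(G)\setminus Z$ with an element of $L(w)$ different from the colours of its at most $|Z|\le 2t-1<\frac{7t-3}{2}$ precoloured neighbours. Then (C1) and (C3) are immediate, and (C2) is trivial since every monochromatic component has at most $|V(G)|\le N_t+2t-1$ vertices.

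For the inductive step I would distinguish two cases. In the first, suppose $G$ has a $Z$-good separation $\{A,B\}$ with $|A\cap B|\le t$. Orienting so that $|Z\cap(B-A)|\le|Z\cap(A-B)|$ forces $|Z\cap(B-A)|\le t-1$. Set $A':=A\cup(Z\cap(B-A))$ and apply the inductive hypothesis to $(G[A'],Z,L|_{A'})$; this is valid since $|V(G[A'])|<|V(G)|$ (because $B-A$ contains a vertex outside $Z$ by $Z$-goodness), $G[A']$ has no $K_t$-minor, and $|Z|\le 2t-1$. The point of enlarging $A$ to $A'$ is precisely so that (C3) for the side-$A'$ subproblem controls the edges from $Z\cap(B-A)$ to $(A\cap B)\setminus Z$, which is where a naive separation argument would fail. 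Let $f_{A'}$ be the output. Then apply the hypothesis to $(G[B],Z_B,L_B)$, where $Z_B:=(Z\cap B)\cup(A\cap B)$ has size at most $|Z\cap(B-A)|+|A\cap B|\le(t-1)+t=2t-1$, $L_B(v):=\{f_{A'}(v)\}$ for $v\in Z_B$, and $L_B(v):=L(v)$ otherwise. Let $f_B$ be the output and set $f:=f_{A'}|_A\cup f_B$; this is well defined because the two colourings agree on $A\cap B$ (by construction of $L_B$) and on $Z\cap(B-A)$ (both forced by $L$). Properties (C1) and (C3) for $f$ then follow immediately from the corresponding properties of $f_{A'}$ and $f_B$. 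For (C2), a component $C$ of $G[V_i]$ meeting $Z$ is contained in $Z$ by (C3) for the original $G$ and hence has at most $2t-1$ vertices, while a component $C$ disjoint from $Z$ cannot meet both $A-B$ and $B-A$, since a connecting edge $wx\in E(G[V_i])$ would have $w\in(A\cap B)\setminus Z\subseteq Z_B$ and $x\in(B-A)\setminus Z$, contradicting (C3) for $f_B$. Therefore $C\subseteq A$ or $C\subseteq B$, and $C$ is then a component inside the corresponding subproblem, so the inductive bound $|C|\le N_t+2t-1$ applies.

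In the second case no such $Z$-good separation exists, so $G$ is essentially $(t+1)$-connected outside $Z$. Here I would invoke the contractibility condition from \secref{Contractibility} to contract a set of edges of $G-Z$ into a minor $G'$ of connectivity at least $t+1$. If $|V(G')|\ge N_t$, then \thmref{NT} furnishes a $K_t$-minor of $G'$, hence of $G$, contradicting the hypothesis. Otherwise $|V(G')|<N_t$, and any list colouring of $G'$ (obtained within the $\frac{7t-3}{2}$-element lists, reduced by at most $2t-1$ colours forced by $Z$) lifts to a colouring of $G$ by giving each branch set the colour of its vertex in $G'$; the resulting monochromatic components have size at most $N_t$ plus at most $2t-1$ contributed by $Z$, yielding the required bound $N_t+2t-1$.

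The main obstacle is the second case: arranging the contraction so that the branch sets remain disjoint from $Z$ (so that (C3) can be preserved), the contracted graph is genuinely $(t+1)$-connected (so that \thmref{NT} applies), and the lift of the list colouring of $G'$ respects (C2). The constant $\frac{7t-3}{2}$ is calibrated to absorb the $2t-1$ forced colours from $Z$, the $(t+1)$-connectivity target needed to invoke \thmref{NT}, and the slack demanded by the new contractibility lemma of \secref{Contractibility}; verifying that this arithmetic really closes is the delicate part of the proof.
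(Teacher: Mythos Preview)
Your base case and your separation case are essentially the paper's Cases~I and~III (with slightly different bookkeeping), and those parts are fine. The genuine gap is in your ``second case''. Two things go wrong there.

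First, you have omitted the paper's Case~II: if some vertex $x\in V(G)-Z$ has degree less than $\tfrac{7t-3}{2}$, delete it, colour $G-x$ by induction, and give $x$ a colour avoiding all its neighbours. This step is not optional cleanup; it is what guarantees the degree hypothesis of \thmref{Hope}. That theorem needs every vertex of $\bigcup_{v\in Z}(N_G(v)-Z)$ to have degree at least $\tfrac{3}{2}k+|Z|-2=\tfrac{7t-3}{2}$ (with $k=t+1$), and without Case~II you have no control over these degrees.

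Second, your handling of the contracted graph is both mis-stated and unnecessary. \thmref{Hope} contracts at most $|Z|$ edges, each with \emph{one endpoint in $Z$} (not edges of $G-Z$ as you wrote), so the resulting $(t+1)$-connected minor $H$ has at least $|V(G)|-|Z|$ vertices. Since you are past the base case, $|V(G)|>N_t+2t-1\ge N_t+|Z|$, hence $|V(H)|\ge N_t$ always holds. \thmref{NT} then yields a $K_t$-minor in $H$, and therefore in $G$, which is a contradiction. There is no ``$|V(G')|<N_t$'' sub-case, and no lifting argument is needed; your proposed lift (colour each branch set by the colour of its image, after removing up to $2t-1$ forced colours from the lists) does not come with any bound on monochromatic component size and the residual list size $\tfrac{3t-1}{2}$ is not enough to do anything useful anyway. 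The whole point of the architecture is that the no-$Z$-good-separation case is \emph{impossible} once the other cases are exhausted, so it terminates in a contradiction rather than a colouring.
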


\begin{proof}
We proceed by induction on $|V(G)|$. 

\textbf{Case I:} First suppose that $|V(G)|\leq N_t+2t-1$. 
For each vertex $v\in Z$, let $f(v)$ be the element of $L(v)$.
For each vertex $w\in V(G)-Z$, choose $f(w)\in L(w)$ such that $f(w)\neq f(v)$ for every vertex $v\in Z$. This is possible since $|L(w)|\geq\frac{7t-3}{2}>2t-1\geq|Z|$. 
Thus (C1) and (C3) are satisfied. (C2) is satisfied since $|V(G)|\leq N_t+2t-1$. 
Now assume that $V(G)\geq N_t+2t-1$. 


\textbf{Case II:} Suppose that some vertex $x\in V(G)-Z$ has degree less than $\frac{7t-3}{2}$ in $G$. 
Let $f$ be the function obtained by induction applied to $G-x$ with $Z$ precoloured.
Choose $f(x)\in L(x)$ such that $f(x)\neq f(y)$ for each $y\in N_G(x)$. 
This is possible since $|L(x)|\geq \frac{7t-3}{2}>\deg(x)$. 
Thus $x$ is in its own monochromatic component. 
Hence (C1), (C2) and (C3) are maintained. 
Now assume that every vertex in $V(G)-Z$ has degree at least $\frac{7t-3}{2}$.

\textbf{Case III:} Suppose that $G$ has a $Z$-good $t$-separation $\{A,B\}$.
Let $P:=Z-B$ and $Q:=Z\cap A\cap B$ and $R:=Z-A$ and $X:=(A\cap B)-Z$.
Thus $P,Q,R,Z$ are pairwise disjoint.
Since $Z=P\cup Q\cup R$, we have $|P|+|Q|+|R|\leq 2t-1$.
Since $A\cap B=Q\cup X$, we have $|Q|+|X|\leq t$ and $|Q|+2|X|\leq 2t$.
Thus  $|P|+|R|+2|Q|+2|X|=(|P|+|Q|+|R|)+(|Q|+2|X|)\leq 4t-1$.
Without loss of generality, $|P|\leq|R|$. 
Thus  $2|P|+2|Q|+2|X|\leq 4t-1$, implying $|P|+|Q|+|X|\leq 2t-1$.
That is, $|A\cap(B\cup Z)|\leq 2t-1$.

Now $B\cup Z\neq V(G)$, as otherwise $A-B\subseteq Z$ and $\{A,B\}$ would be $Z$-bad. Thus the induction hypothesis is applicable to $G[B\cup Z]$ with $Z$ precoloured. (This is why we need to consider $Z$-good and $Z$-bad separations.)\ Hence there is a function $f$ such that:
\begin{enumerate}
\item[(C$1'$)] $f(v)\in L(v)$ for each vertex $v\in B\cup Z$,
\item[(C$2'$)] for each colour $i$, if $V'_i:=\{v\in B\cup Z:f(v)=i\}$ then each component of $G[V'_i]$ has at most $N_t+2t-1$ vertices, and
\item[(C$3'$)] $f(v)\neq f(w)$ for all $v\in Z$ and $w\in (B\cap N_G(v))-Z$.
\end{enumerate} 

Let $L'(w):=\{f(w)\}$ for each vertex $w\in A\cap(B\cup Z)$. 
Let $L'(v):=L(v)$ for each vertex $v\in A-(B\cup Z)$. 
Now apply induction to $G[A]$ with list assignment $L'$, and $A\cap(B\cup Z)$ precoloured. 
This is possible since $|A\cap(B\cup Z)|\leq 2t-1$. Hence there is a function $f$ such that:
\begin{itemize}
\item[(C$1''$)] $f(v)\in L'(v)$ for each vertex $v\in A$,
\item[(C$2''$)] for each colour $i$, if $V''_i:=\{v\in A:f(v)=i\}$ then each component of $G[V''_i]$ has at most $N_t+2t-1$ vertices, and
\item[(C$3''$)] $f(v)\neq f(w)$ for all neighbours $v\in A-(B-Z)$ and $w\in A\cap(B\cup Z)$.
\end{itemize}

Since $L'(v)\subseteq L(v)$, conditions (C$1'$) and (C$1''$) imply (C1). 
Since there is no edge between $A-B$ and $B-A$ in $G$, 
(C$3'$) and (C$3''$) imply that 
every component of $G[V_i]$ is a component of $G[V_i']$ or $G[V_i'']$ or $G[Z]$. 
Since $N_t+2t-1\geq|Z|$, conditions (C$2'$) and (C$2''$) imply (C2).
Hence (C1), (C2) and (C3) are satisfied.
Now assume that every $t$-separation of $G$ is $Z$-bad. 

\textbf{Case IV:} 
Every vertex in $V(G)-Z$ has degree at least $\frac{7t-3}{2}\geq\frac{3}{2}k+|Z|-2$, where $k:=t+1$. 
Thus \thmref{Hope} below implies that $G$ has a $(t+1)$-connected minor $H$ with at least $|V(G)|-|Z|\geq N_t$ vertices. 
By \thmref{NT}, $H$ and thus $G$, has a $K_t$-minor. This contradiction completes the proof.
\end{proof}

\section{Contractibility}
\seclabel{Contractibility}

The main result in this section is \thmref{Hope}, which was used in the proof of \lemref{ListColouring}. The proof reduces to questions about contractibility that are of independent interest. \citet{Mader-DM88} proved the following sufficient condition for a given vertex to be incident to an edge whose contraction maintains connectivity\footnote{\thmref{Mader} is a special case of Theorem 1 in \citep{Mader-DM88} with $\mathfrak{S}=\{\{v,w\}:w\in N_G(v)\}$. Reference \citep{Mader-DM88} cites reference \citep{Mader-AM71} for the proof of Theorem 1 in \citep{Mader-DM88}. The proof of our \thmref{Contractibility} was obtained by following a treatment of Mader's work by \citet{Kriesell-CPC05}.}. See references \citep{Kriesell-GC02, Mader-DM05} for surveys of results in this direction. 

\begin{theorem}[\citet{Mader-DM88}]
\thmlabel{Mader}
Let $v$ be a vertex in a $k$-connected graph $G$, such that every neighbour of $v$ has degree at least $\frac{3}{2}k-1$. Then $G/vw$ is $k$-connected for some edge $vw$ incident to $v$. 
\end{theorem}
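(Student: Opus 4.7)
I proceed by contradiction: suppose that $G/vw$ is not $k$-connected for every $w\in N(v)$. Since $G$ is itself $k$-connected, for each such $w$ the $(k-1)$-separator witnessing the failure must contain the contracted vertex, so it lifts to a $k$-separator of $G$ containing both $v$ and $w$. Among all pairs $(T,F)$ in which $T$ is a $k$-separator of $G$ with $v\in T$ and $F$ is a component of $G-T$, I choose one so that $|F|$ is minimum. By a standard minimum-separator argument, every vertex of $T$ has a neighbour in every component of $G-T$ (otherwise $T$ could be replaced by a $(k-1)$-separator of $G$), so in particular $v$ has a neighbour $x\in F$. Since $T$ separates $F$ from its complement, $N(x)\subseteq(T\cup F)\setminus\{x\}$, and comparing cardinalities with the hypothesis $\deg(x)\geq\tfrac{3k}{2}-1$ gives the key bound $|F|\geq k/2$.

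The contradiction hypothesis applied to the edge $vx$ produces a second $k$-separator $T'$ with $\{v,x\}\subseteq T'$. Write $F_0:=V\setminus(T\cup F)$ for the union of the other components of $G-T$, and for a component $F'$ of $G-T'$ write $F'_0:=V\setminus(T'\cup F')$. Form the two \emph{corner separators}
\[
T_1:=(T\cap T')\cup(F\cap T')\cup(T\cap F'),\qquad T_2:=(T\cap T')\cup(F_0\cap T')\cup(T\cap F'_0).
\]
Both contain $v$, and by submodularity $|T_1|+|T_2|=|T|+|T'|=2k$, so at least one has size $\leq k$. Using that $T$ separates $F$ from $F_0$ and $T'$ separates $F'$ from $F'_0$, a direct check shows $F\cap F'$ has no $G$-edges to the remainder of $V\setminus T_1$, so it is a union of components of $G-T_1$. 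If some component $F'$ of $G-T'$ meets $F$, then $F\cap F'\neq\emptyset$, and since $x\in F\cap T'$ we have $F\not\subseteq F'$ and hence $|F\cap F'|<|F|$; combined with $|T_1|\leq k$ (using the submodularity identity and an appropriate choice), this yields a $k$-separator of $G$ containing $v$ with a fragment strictly smaller than $F$, contradicting either the $k$-connectivity of $G$ (if $|T_1|<k$) or the minimality of $|F|$ (if $|T_1|=k$).

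The main obstacle will be the non-crossing case, in which no component of $G-T'$ meets $F$, equivalently $F\subseteq T'$. Here the degree hypothesis becomes indispensable: combined with $|F|\geq k/2$ and $v,x\in T\cap T'$, the containment $F\subseteq T'$ forces $|T\cap T'|\leq k/2$ and pins down the structure of $T'$. To finish, I would interchange the roles of $T$ and $T'$: pick a smallest component $F'$ of $G-T'$, a neighbour $y\in F'\cap N(v)$ (which exists by the minimum-separator argument applied to $T'$), and a $k$-separator $T''\supseteq\{v,y\}$. Re-running the crossing argument on the pair $(T',T'')$ either produces a $v$-relevant $k$-separator with a fragment smaller than $|F|$ (contradicting minimality), or degenerates to the symmetric configuration $F'\subseteq T''$. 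In this doubly non-crossing situation, the simultaneous degree bounds on $x$ and $y$, together with the submodularity identities among $T$, $T'$ and $T''$, allow a careful count of neighbours that forces a final contradiction.
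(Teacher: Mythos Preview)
Your setup (contradict, pick a $k$-separator $T\ni v$ with a minimum fragment $F$, find a neighbour $x\in F$, get a second separator $T'\ni v,x$, and analyse how $T$ and $T'$ cross) is exactly the framework the paper uses; in the paper this is the $|Z|=1$ case of Lemma~3.4. The crossing case is essentially right, though you gloss over why the corner separator on the $F\cap F'$ side has order $\leq k$: submodularity alone gives $|T_1|+|T_2|=2k$, and you need the \emph{opposite} corner $F_0\cap F'_0$ to be nonempty so that $|T_2|\geq k$ by $k$-connectivity. The paper handles this by first deriving the four crossing inequalities and then showing that if \emph{both} diagonal corners are nonempty one gets $|T_1|=k$ exactly.

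The genuine gap is your non-crossing case $F\subseteq T'$. Your plan is to pass to a smallest fragment $F'$ of $G-T'$, pick $y\in F'\cap N(v)$, produce a third separator $T''\ni v,y$, and rerun the argument on $(T',T'')$. But the minimality you chose was of $|F|$ over \emph{all} $k$-separators through $v$; when you cross $T'$ with $T''$ you obtain a fragment strictly inside $F'$, not strictly inside $F$, and there is no reason $|F'|$ is as small as $|F|$, so you cannot invoke minimality. Worse, the residual ``doubly non-crossing'' situation $F\subseteq T'$ and $F'\subseteq T''$ is left as an unspecified ``careful count''; there is no indication of what identity or inequality among $T,T',T''$ would finish it, and I do not see one.

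The paper avoids this iteration entirely. After the crossing case it deduces (in your notation, with $Z=\{v\}$) that one of the four fragments $F,F_0,F',F'_0$ has at most $\tfrac12\max\{|T\setminus T'|,|T'\setminus T|\}\leq\tfrac{k-1}{2}$ vertices. Since each fragment contains a neighbour $w$ of $v$ and $N(w)$ lies in that fragment together with one of $T,T'$ (and $v$ is already in $T\cap T'$), this forces $\deg(w)\leq\tfrac{k-1}{2}+(k-1)-1<\tfrac{3}{2}k-1$, the desired contradiction. So the fix is not to introduce a third separator but to squeeze a small fragment directly out of the pair $(T,T')$ once you know $F\subseteq T'$; the crossing inequalities you need for this are exactly those labelled \eqref{eqn:BlahOne}--\eqref{eqn:BlahFour} in the paper.
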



The following strengthening of \thmref{Mader} describes a scenario when there is an edge whose contraction \emph{increases} connectivity.

\begin{theorem}
\thmlabel{Contractibility}
Let $v$ be a vertex in graph $G$, such that $N_G(v)$ is the only minimal $(k-1)$-separator, 
and every neighbour of $v$ has degree at least $\frac{3}{2}k-1$. Then $G/vw$ is $k$-connected for some edge $vw$ incident to $v$.
\end{theorem}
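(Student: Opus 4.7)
I would adapt the proof of \thmref{Mader}, following the Kriesell exposition cited in the footnote. Suppose for contradiction that $G/vw$ is not $k$-connected for every edge $vw$ incident to $v$; then for each $w\in N_G(v)$, the graph $G/vw$ admits a separator $S_w$ of size at most $k-1$.

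\textbf{Step 1 (lifting each $S_w$ to $G$).} I first show the identified vertex of $G/vw$ must lie in $S_w$. Otherwise $S_w\subseteq V(G)\setminus\{v,w\}$, and since the edge $vw$ survives in $G-S_w$, $S_w$ is itself a separator of $G$ of size at most $k-1$. Any such separator contains a minimal separator of size at most $k-1$, which by hypothesis equals $N_G(v)$; hence $w\in N_G(v)\subseteq S_w$, contradicting $w\notin S_w$. So the identified vertex lies in $S_w$, and lifting yields a separator $T_w$ of $G$ with $\{v,w\}\subseteq T_w$ and $|T_w|\leq k$.

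\textbf{Step 2 (extremal argument).} Among all triples $(w,T_w,C)$ with $w\in N_G(v)$, $T_w$ lifted as in Step~1, and $C$ a component of $G-T_w$, I would choose one minimising $|C|$. Write $T:=T_w$, $Y:=T\setminus\{v,w\}$, and $D:=V(G)\setminus(T\cup C)$, the latter nonempty. Following Mader, I would pick a neighbour $w'\in N_G(v)\cap C$; the degree bound $\deg(w')\geq\tfrac{3}{2}k-1$ forces many of $w'$'s neighbours into $Y\cup\{v\}$, and then examining a lifted separator $T_{w'}$ arising from $G/vw'$ produces a component of $G-T_{w'}$ strictly contained in $C$, contradicting the minimal choice.

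\textbf{Main obstacle.} The delicate adaptation, distinguishing this from Mader's setting, is that $G$ is not assumed $k$-connected: $N_G(v)$ is a genuine $(k-1)$-separator. Wherever Mader invoked ``every separator of $G$ has size at least $k$'', I must substitute: any separator of $G$ of size at most $k-1$ strictly contains $N_G(v)$, and hence has size at least $\deg(v)+1$. This substitution is legitimate because every lifted separator contains $v\notin N_G(v)$, so no lifted separator can equal $N_G(v)$ itself. Verifying this replacement at each point where $k$-connectivity of $G$ was used is the main work; the arithmetic then matches Mader's and remains tight at the threshold $\tfrac{3}{2}k-1$.
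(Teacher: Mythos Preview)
Your high-level plan matches the paper's: lift each $(k-1)$-separator of $G/vw$ to a $k$-separator $T_w$ of $G$ with $\{v,w\}\subseteq T_w$, pick one with a minimum fragment, and run a Mader-style crossing argument. The paper casts this as the case $Z=\{v\}$ of a more general statement (\thmref{Hope}, proved via \lemref{Induction}), but for \thmref{Contractibility} alone the content is the same.

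The gap is in your proposed substitute for $k$-connectivity. You suggest: ``any $(k-1)$-separator contains $N_G(v)$, and hence has size at least $\deg(v)+1$ whenever it also contains $v$''. This is too weak at the first place you need it. To pick $w'\in N_G(v)\cap C$ in Step~2 you must rule out $N_G(v)\cap C=\emptyset$; Mader does so because otherwise $T\setminus\{v\}$ would be a $(k-1)$-separator. But $T\setminus\{v\}$ does \emph{not} contain $v$, and your substitution then gives only $N_G(v)\subseteq T\setminus\{v\}$ --- no contradiction, just the information that every neighbour of $v$ already lies in $T$, leaving both fragments disjoint from $N_G(v)$ and no $w'$ to choose. (This really happens: with $k=4$, let $a,b,c$ be a triangle and attach pairwise non-adjacent $v,r_1,r_2$ each joined to all of $a,b,c$; then $\{a,b,c\}$ is the unique minimal $3$-separator, each neighbour of $v$ has degree $5=\tfrac{3}{2}k-1$, yet no $G/vw$ is $4$-connected.) The paper's remedy is to work at the level of \emph{separations}, not separators: the hypothesis is recast as ``every $(k-1)$-separation of $G$ is $\{v\}$-bad'', i.e.\ one fragment equals $\{v\}$. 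Under that hypothesis, if $N_G(v)\cap C=\emptyset$ one slides $v$ from the separator to the $D$-side and obtains a $(k-1)$-separation with fragments $C$ and $D\cup\{v\}$, neither equal to $\{v\}$ --- a $\{v\}$-good $(k-1)$-separation, contradiction. The same device yields $|U|\geq k$ for every quadrant separator in the crossing step, since $v\in S\cap T\subseteq U$ forces both fragments to miss $v$. So the correct replacement for ``$G$ is $k$-connected'' is the separation-level condition the paper uses, not the separator-level one you propose; once you make that switch, your outline goes through.
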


The condition in \thmref{Contractibility} is equivalent to saying that every $(k-1)$-separation of $G$ is $\{v\}$-bad. 
Thus \thmref{Contractibility} is a special case of the following theorem  (with $Z=\{v\}$).

\begin{theorem}
\thmlabel{Hope}
Suppose that $G$ is a graph and for some $Z\subset V(G)$,
\begin{itemize}
\item every $(k-1)$-separation of $G$ is $Z$-bad, and
\item every vertex in $\cup\{N_G(v)-Z:v\in Z\}$ has degree at least $\frac{3}{2}k+|Z|-2$ in $G$.
\end{itemize}
Then $G$ has a set of at most $|Z|$ edges, each with one endpoint in $Z$, whose contraction gives a $k$-connected graph.
\end{theorem}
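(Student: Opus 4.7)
The plan is to prove the theorem by induction on $|Z|$, extending the argument of \thmref{Mader} (which is essentially the case $|Z|=1$). When $|Z|=0$ the hypothesis that every $(k-1)$-separation is $Z$-bad forces $G$ to have no $(k-1)$-separation at all, since by definition both fragments of a separation are nonempty while $Z$-bad with $Z=\emptyset$ requires a fragment to be empty. By Menger's theorem $G$ is already $k$-connected (the degenerate case $|V(G)|\le k$ being vacuous), and the empty contraction suffices.

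For the inductive step, fix $v\in Z$ and search for $w\in N_G(v)$ such that, after contracting $vw$ to a new vertex $v^{*}$, the graph $G^{*}:=G/vw$ satisfies the hypothesis of the theorem with the smaller set $Z^{*}:=Z\setminus\{v\}$ when $w\notin Z$, or $Z^{*}:=(Z\setminus\{v,w\})\cup\{v^{*}\}$ when $w\in Z$; either way $|Z^{*}|=|Z|-1$, and induction would then yield at most $|Z|$ contractions in total. Contraction decreases any vertex degree by at most one, so the degree threshold $\tfrac{3}{2}k+|Z|-2$ weakens exactly to $\tfrac{3}{2}k+|Z^{*}|-2$, and the condition transfers (a short check is needed for $v^{*}$ in the second case, using the high degree of $w$). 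The nontrivial check is $Z^{*}$-badness of every $(k-1)$-separation of $G^{*}$: lifting such a separation back to $G$, it corresponds either to (a) a $(k-1)$-separation of $G$ with $v,w$ on the same side (already $Z$-bad, and since $v$ is swallowed by $v^{*}$ a short case analysis converts $Z$-badness to $Z^{*}$-badness), or to (b) a $k$-separation of $G$ with $\{v,w\}$ in the separator and neither fragment contained in $Z^{*}$. Case (b) is the sole obstruction.

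If some $w$ avoids obstruction (b), induction finishes. Otherwise, for every $w\in N_G(v)\setminus Z$ there is a $k$-separation $\{A_w,B_w\}$ with $\{v,w\}\subseteq A_w\cap B_w$ and both fragments meeting $V(G)\setminus Z$. Following the Kriesell/Mader exchange technique, I would pick such a pair minimising the size of the smaller fragment $A:=A_w-B_w$, choose $y\in A\cap N_G(w)$ (or a dual choice on the other side if no such $y$ exists), and apply the degree bound at $y$: since $y$ is a non-$Z$ neighbour of $v\in Z$ through the chain $y\!\sim\!w\!\in\!A_w\cap B_w$, we obtain $\tfrac{3}{2}k+|Z|-2\le\deg_G(y)\le|A|+|A_w\cap B_w|-1=|A|+k-1$, forcing $|A|\ge\tfrac{k}{2}+|Z|-1$. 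A second neighbour $w'\in N_G(v)\setminus Z$ supplies a separation $\{A_{w'},B_{w'}\}$ of the same form, which can be crossed with $\{A_w,B_w\}$ in the standard way to produce a strictly smaller $k$-separation again of this form, contradicting minimality. The main obstacle I anticipate is executing this final cross-separation while carrying the extra $|Z|-1$ slack in the degree bound: in Mader's original $|Z|=1$ case the bound $\tfrac{3}{2}k-1$ is tight, and here the extra slack must exactly absorb the up to $|Z|-1$ vertices of $Z\setminus\{v\}$ that may lie in the intersection of the two crossed separators and thereby obstruct the exchange. The degenerate subcase $N_G(v)\subseteq Z$ also needs separate attention: there one contracts $vw$ for some $w\in N_G(v)\cap Z$ directly, and must verify that obstruction (b) cannot arise without any high-degree vertex available to block it.
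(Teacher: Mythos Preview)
Your inductive framework is the same as the paper's: induct on $|Z|$, pick $v\in Z$, find $w\in N_G(v)-Z$ so that every $(k-1)$-separation of $G/vw$ is $(Z\setminus\{v\})$-bad, note that degrees drop by at most one so the threshold $\tfrac32 k+|Z|-2$ becomes $\tfrac32 k+|Z\setminus\{v\}|-2$, and recurse. (The paper never contracts an edge inside $Z$; when $N_G(v)\subseteq Z$ for every $v\in Z$ it observes that $G-Z$ is already $k$-connected, so your separate $w\in Z$ branch is unnecessary.) The real content is the existence of such a $w$, and here your sketch has a genuine error. You take $y\in A\cap N_G(w)$ and then declare $y$ ``a non-$Z$ neighbour of $v$ through the chain $y\sim w$''; but adjacency to $w$ says nothing about adjacency to $v$, so the degree hypothesis does not apply to $y$. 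What the paper proves first---and what you are missing---is that for every separation $\{A,B\}$ in your family, each fragment meets $N_G(v)-Z$: if, say, $(A-B)\cap(N_G(v)-Z)=\emptyset$, then pushing $v$ out of the separator into the $B$-side yields a $Z$-good $(k-1)$-separation of $G$, contrary to hypothesis. This is how one obtains the second vertex $y\in (N_G(v)-Z)\cap(A-B)$ in the minimal fragment, and the second separation $\{C,D\}$ is the one belonging to \emph{that} $y$.

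Your use of the degree bound is also inverted relative to what is needed. Applying it up front to force $|A|\ge\tfrac{k}{2}+|Z|-1$ is harmless but does not drive the contradiction. In the paper the degree bound is the \emph{final} step: after crossing the minimal separation $\{A,B\}$ with $\{C,D\}$, a four-way case analysis on which of the corners $A'\cap C',\,A'\cap D',\,B'\cap C',\,B'\cap D'$ lie inside $Z$ either (i) produces a separation in the family with a strictly smaller fragment, contradicting minimality, or (ii) forces some fragment $Q\in\{A',B',C',D'\}$ to satisfy $|Q-Z|\le\tfrac12(k-1)$. Only then does the guaranteed vertex of $N_G(v)-Z$ inside $Q$ have degree at most $\tfrac12(k-1)+|Z|+(k-1)-1<\tfrac32 k+|Z|-2$, the contradiction. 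The extra $|Z|-1$ slack you correctly worry about is absorbed exactly here, through bounding $|Q-Z|$ rather than $|Q|$; this case analysis is the substance you are waving past with ``in the standard way''.
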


\begin{proof}
We proceed by induction on $|Z|$.
If $Z=\emptyset$, or $N_G(v)\subseteq Z$ for each $v\in Z$, then $G-Z$ is $k$-connected.
Now assume that $N_G(v)\not\subseteq Z$ for some $v\in Z$.
By assumption, every vertex in $N_G(v)-Z$ has degree at least $\frac{3}{2}k+|Z|-2$ in $G$.
By \lemref{Induction} below there is an edge $vw$ with $w\in N_G(v)-Z$, 
such that  every $(k-1)$-separation of $G/vw$ is $(Z-\{v\})$-bad.
For every vertex $x\in V(G/vw)$,
If contracting $vw$ decreases the degree of some vertex $x$, then $x$ is a common neighbour of $v$ and $w$, and $\deg_{G/vw}(x)=\deg_G(x)-1$. Thus $\deg_{G/vw}(x)\geq\frac{3}{2}k+|Z-\{v\}|-2$.
By induction, $G/vw$ has a set $S$ of at most $|Z-\{v\}|$ edges whose contraction gives a $k$-connected graph.
Thus $S\cup\{vw\}$ is a set of at most $|Z|$ edges in $G$ whose contraction gives a $k$-connected graph.
\end{proof}

\begin{lemma}
\lemlabel{Induction}
Suppose that $G$ is a graph and for some $Z\subset V(G)$ and for some vertex $v\in Z$ with $N_G(v)-Z\neq\emptyset$,
\begin{itemize}
\item every $(k-1)$-separation of $G$ is $Z$-bad, and
\item every vertex in $N_G(v)-Z$ has degree at least $\frac{3}{2}k+|Z|-2$ in $G$.
\end{itemize}
Then there is an edge $vw$ with $w\in N_G(v)-Z$, such that 
\begin{itemize}
\item every $(k-1)$-separation of $G/vw$ is $(Z-\{v\})$-bad.
\end{itemize}
\end{lemma}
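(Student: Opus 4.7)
I would proceed by contradiction. Suppose that for every $w\in N_G(v)- Z$, the graph $G/vw$ admits a $(k-1)$-separation $\{A_w,B_w\}$ that is $(Z-\{v\})$-good. The first task is to lift each such separation back to $G$. Denoting by $u$ the vertex of $G/vw$ resulting from the contraction, there are three possibilities for its location: $u\in A_w\cap B_w$, $u\in A_w- B_w$, or $u\in B_w- A_w$. If $u\in A_w- B_w$ (the third case is symmetric), lifting yields a $(k-1)$-separation of $G$ with fragments $F_1\supseteq\{v,w\}$ and $F_2=B_w- A_w$; by hypothesis this separation is $Z$-bad, and since $w\in F_1$ with $w\notin Z$, we must have $F_2\subseteq Z$; but $v\notin F_2$, so actually $F_2\subseteq Z-\{v\}$, which translates directly to $\{A_w,B_w\}$ being $(Z-\{v\})$-bad, a contradiction. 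Therefore $u\in A_w\cap B_w$, and the lift is a genuine $k$-separator $S_w=(A_w\cap B_w-\{u\})\cup\{v,w\}$ of $G$ with fragments $T_1^w=A_w- B_w$ and $T_2^w=B_w- A_w$, both satisfying $T_i^w\not\subseteq Z$ (the direct translation of $(Z-\{v\})$-goodness, using $v\notin T_i^w$).

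The next task is a Mader-style minimum-fragment argument. Consider triples $(w,S_w,T)$ in which $w\in N_G(v)- Z$, $S_w$ is a $k$-separator of $G$ with $\{v,w\}\subseteq S_w$, and $T$ is the smaller of the two fragments of $G- S_w$, subject to both fragments not being contained in $Z$. Pick one minimizing $|T|$, and call the companion fragment $T'$. A preliminary claim is that $v$ must have a neighbour in $T\setminus Z$: if $v$ had no neighbour in $T$, then $S_w-\{v\}$ would be a $(k-1)$-separator of $G$ with fragments $T$ and $T'\cup\{v\}$, both outside $Z$, contradicting the hypothesis that every $(k-1)$-separation of $G$ is $Z$-bad; a slight elaboration of the same argument, crucially using $|Z\cap T|\leq|Z|-1$ (since $v\notin T$), rules out the possibility that $N_G(v)\cap T\subseteq Z$. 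Choose $x\in T\cap(N_G(v)- Z)$; the hypothesis gives $\deg_G(x)\geq\tfrac{3}{2}k+|Z|-2$, and applying the first step to $x$ in place of $w$ supplies a second $k$-separator $S_x\ni v,x$ of $G$ with fragments $U,U'$ each not contained in $Z$.

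The contradiction is then extracted by the classical submodular exchange on the two $k$-separators $S_w$ and $S_x$: decomposing $V(G)-(S_w\cup S_x)$ into the four intersection pieces $T\cap U$, $T\cap U'$, $T'\cap U$, $T'\cap U'$ and applying the standard inequality on separator sizes, one produces a new $k$-separator whose smaller fragment is a strict subset of $T$; using the degree lower bound on $x$ to control the sizes of the intersection pieces, and the non-subset-of-$Z$ property of $T,T',U,U'$ to verify that the new fragments still escape $Z$, this contradicts the minimality of $|T|$. The main obstacle is this final verification, because the setting here differs from Mader's in that $G$ is not assumed $k$-connected—only that every $(k-1)$-separation is $Z$-bad—so the $k$-separators in play are not minimum cuts and the exchange argument must track the $Z$-content of each piece carefully. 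The extra additive slack $|Z|-1$ in the degree hypothesis (relative to Mader's $\tfrac{3}{2}k-1$) is precisely what is needed to absorb the up to $|Z|-1$ vertices of $Z$ that may appear in the derived separator while still forcing the new, strictly smaller fragment to avoid lying entirely inside $Z$.
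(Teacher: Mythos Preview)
Your overall strategy coincides with the paper's: lift each hypothetical $(Z-\{v\})$-good $(k-1)$-separation of $G/vw$ to a $Z$-good $k$-separation of $G$ containing $v$ and $w$, pick one with minimum fragment, find a second neighbour of $v$ inside that fragment, and cross the two separators. Your first paragraph is correct and in fact more careful than the paper's one-line justification for why the contracted vertex must lie in the separator.

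The sketch breaks down in the third paragraph. You describe the endgame as a single mechanism: the submodular exchange produces a new $k$-separator whose smaller fragment is strictly inside $T$, and the degree slack $|Z|-1$ is spent on ensuring this new fragment still escapes $Z$, so minimality is contradicted. That is not how the argument closes. In the paper the exchange only yields a minimality contradiction in the case where \emph{both} diagonal pieces $T\cap U$ and $T'\cap U'$ (in your notation) lie outside $Z$; there one gets $|S_x\cap T|=|S_w\cap U'|$, the derived separator has size exactly $k$, contains $v$ and $x$, and has a fragment strictly inside $T$. In the remaining cases---where some diagonal piece is contained in $Z$---no usable smaller separation is produced at all. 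Instead one argues that, up to symmetry, $T\setminus Z\subseteq S_x$, and a short case analysis using the submodular inequalities shows that some fragment $Q\in\{T,T',U,U'\}$ satisfies $|Q\setminus Z|\leq\tfrac12\max\{|S_w\setminus S_x|,|S_x\setminus S_w|\}\leq\tfrac12(k-1)$. Then a vertex of $N_G(v)\setminus Z$ inside $Q$ has degree at most $\tfrac12(k-1)+|Z|+(k-1)-1<\tfrac32 k+|Z|-2$, a direct contradiction to the degree hypothesis. So the additive $|Z|$ in the degree bound is not absorbed in the separator of a new separation as you suggest; it is the $|Z|$ that bounds $|Q\cap Z|$ in this terminal degree estimate.

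A second soft spot is your ``slight elaboration'' showing $N_G(v)\cap T\not\subseteq Z$. Invoking $|Z\cap T|\leq|Z|-1$ does not by itself rule this out, since adding $N_G(v)\cap T\cap Z$ to $S_w\setminus\{v\}$ gives a separator of size up to $k+|Z|-2$, not $k-1$. The paper's route here is the observation that if $v$ has no neighbour in $(A-B)\setminus Z$ then removing $v$ from the separator already yields a $Z$-good $(k-1)$-separation; you should check that this is indeed what your argument reduces to rather than something that genuinely uses the cardinality of $Z\cap T$.
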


\begin{proof}

Suppose on the contrary that for each $w\in N_G(v)-Z$, the contracted graph $G/vw$ has a $(Z-\{v\})$-good $(k-1)$-separator. This separator must contain the vertex obtained by contracting $vw$. Thus $G$ has a $Z$-good $k$-separator containing $v$ and $w$.
Let \seps\ be the set of $Z$-good $k$-separations $\{A,B\}$ of $G$ such that $v\in A\cap B$ and $A\cap B\cap (N_G(v)-Z)\neq\emptyset$. We say $\{A,B\}\in\seps$ \emph{belongs} to $x$ for each $x\in A\cap B\cap (N_G(v)-Z)$.
As proved above, for each $w\in N_G(v)-Z$, some separation in \seps\ belongs to $w$.


For each separation $\{A,B\}\in\seps$,
\begin{equation}
\eqnlabel{FragmentContainsNeighbour}
(A-B)\cap(N_G(v)-Z)\neq\emptyset\quad\text{and}\quad
(B-A)\cap (N_G(v)-Z)\neq\emptyset\enspace;
\end{equation}
otherwise $\{A-\{v\},B\}$ or $\{A,B-\{v\}\}$ would be a $Z$-good $(k-1)$-separation of $G$.


Say $\{A,B\}\in\seps$ belongs to $x\in N_G(v)-Z$, 
and $\{C,D\}\in\seps$ belongs to $y\in (N_G(v)-Z)-\{x\}$.
Let $S:=A\cap B$ and $T:=C\cap D$ be the corresponding separators in $G$.
Let $A':=A-B$ and $B':=B-A$ and $C':=C-D$ and $D':=D-C$ be the corresponding fragments in $G$.
Let $U:=(S\cap{C'})\cup(S\cap T)\cup (T\cap{A'})$. 
Thus $U$ separates ${A'}\cap{C'}$ and $B'\cup D'$, as illustrated in \figref{Separators}.

\begin{figure}[h]
\begin{center}
\includegraphics{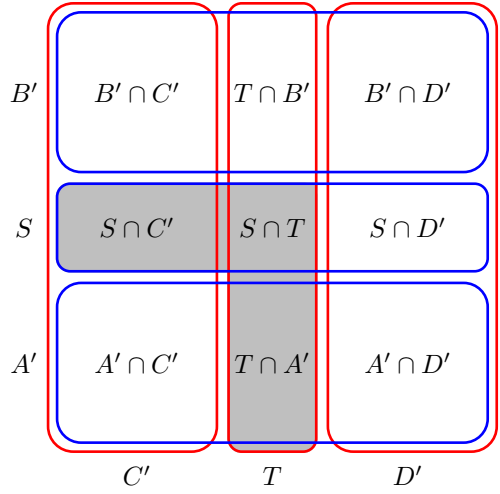}
\caption{\figlabel{Separators} Separator $S$ and its fragments $A'$ and $B'$.
Separator $T$ and its fragments $C'$ and $D'$.
The induced separator $U$ is shaded.}
\end{center}
\end{figure}


Suppose that ${A'}\cap{C'}\not\subseteq Z$.
Since $\{A,B\}$ is $Z$-good, $B'\not\subseteq Z$.
Since $B'\cup D'\not\subseteq Z$, 
$$\mathcal{U}:=\big\{({A'}\cap{C'})\cup U,{B'}\cup{D'}\cup U\big\}$$ 
is a $Z$-good separation of $G$, whose separator is $U$.
Thus $|U|\geq k$.
That is, $|S\cap{C'}|+|S\cap T|+|T\cap{A'}|\geq k$.
Now $|S\cap{C'}|+|S\cap T|=|S|-|S\cap{D'}|\leq k-|S\cap{D'}|$.
Hence $k-|S\cap{D'}|+|T\cap{A'}|\geq k$, implying $|T\cap{A'}|\geq|S\cap{D'}|$. 
Similarly, $|S\cap{C'}|\geq|T\cap{B'}|$.
By symmetry:
\begin{align}
A'\cap {C'}\not\subseteq Z\quad\Longrightarrow\quad&
|T\cap {A'}|\geq|S\cap{D'}|
\quad\text{and}\quad
|S\cap {C'}|\geq|T\cap{B'}|\eqnlabel{BlahOne}\\
A'\cap D'\not\subseteq Z\quad\Longrightarrow\quad&
|T\cap {A'}|\geq|S\cap{C'}|
\quad\text{and}\quad
|S\cap{D'}|\geq|T\cap{B'}|\eqnlabel{BlahTwo}\\
B'\cap C'\not\subseteq Z\quad\Longrightarrow\quad&
|T\cap {B'}|\geq|S\cap{D'}|
\quad\text{and}\quad
|S\cap {C'}|\geq|T\cap{{A'}}|\eqnlabel{BlahThree}\\
B'\cap D'\not\subseteq Z\quad\Longrightarrow\quad&
|T\cap{B'}|\geq|S\cap{{C'}}|
\quad\text{and}\quad
|S\cap{D'}|\geq|T\cap{{A'}}|\eqnlabel{BlahFour}
\end{align}

Choose a separation $\{A,B\}\in\seps$ that minimises $\min\{|A-B|,|B-A|\}$. 
Let $x$ be a vertex in $N_G(v)-Z$ such that $\{A,B\}$ belongs to $x$.
Define the separator $S$, and the fragments $A'$ and $B'$ as above.
Without loss of generality, $|A'|\leq|B'|$. 
By \eqnref{FragmentContainsNeighbour}, there is a vertex $y\in (N_G(v)-Z)\cap{A-B}$.
Let $\{C,D\}$ be a separator in \seps\ that belongs to $y$.
Define the separator $T$, and the fragments $C'$ and $D'$ as above.

Suppose that $A'\cap C'\not\subseteq Z$ and $B'\cap D'\not\subseteq Z$.
By \twoeqnref{BlahOne}{BlahFour}, $|T\cap A'|=|S\cap D'|$. 
Define $U$ and $\mathcal{U}$ as above.
Thus $\mathcal{U}$ is a $Z$-good separation of $G$, whose separator is $U$.
Now $|U|=|S\cap{C'}|+|S\cap T|+|S\cap{D'}|=|S|\leq k$.
Thus $\mathcal{U}$ is a $Z$-good $k$-separation.
Observe that $v\in S\cap T\subseteq U$ and $y\in{A'}\cap T\subseteq U$.
Thus $\mathcal{U}\in\seps$ and $\mathcal{U}$ belongs to $y$.
One fragment of $\mathcal{U}$ is ${A'}\cap{C'}\subseteq A_s-\{y\}$ since $y\in T$.
Thus $|{A'}\cap{C'}|<|A_s|$, which contradicts the choice of $\{A,B\}$.

Thus $A'\cap C'\subseteq Z$ or $B'\cap D'\subseteq Z$.
By symmetry,  $A'\cap D'\subseteq Z$ or $B'\cap C'\subseteq Z$.
It follows that 
${A'}-Z\subseteq T$ or ${B'}-Z\subseteq T$ or 
${C'}-Z\subseteq S$ or ${D'}-Z\subseteq S$.
The choice of $S$ will not be used in the remainder. 
So without loss of generality, $A'-Z\subseteq T$.

We claim that $A'-Z$ or $B'-Z$ or $C'-Z$ or $D'-Z$ has at most $\half\max\{|S-T|,|T-S|\}$ vertices.
If $B'-Z\subseteq T$, then $(A'-Z)\cup(B'-Z)\subseteq T-S$, implying 
$A_S-Z$ or $B'-Z$ has at most $\half|T-S|$ vertices, as claimed.
Now assume that $B'-Z\not\subseteq T$.
Without loss of generality, $B'\cap C'\not\subseteq Z$.
By \eqnref{BlahThree}, $|S\cap C'|\geq|T\cap{A'}|=|A'-Z|$.
If $|A'-Z|\leq\half|S-T|$ then the claim is proved.
Otherwise, $|S\cap C'|\geq|A'-Z|>\half|S-T|$.
Thus $|S\cap{D'}|<\half|S-T|$ (since $S-T$ is the disjoint union of $S\cap{C'}$ and $S\cap{D'}$).
If $D'-Z\subseteq S$ then $|D'-Z|\leq |D'\cap S|<\half|S-T|$.
So assume that $D'-Z\not\subseteq S$. Thus ${D'}\cap{B'}\not\subseteq Z$.
By \eqnref{BlahFour}, 
$|S\cap{D'}|\geq|T\cap{A'}|=|A'-Z|>\half|S-T|$, which is a contradiction.

Hence $|Q-Z|\leq\half\max\{|S-T|,|T-S|\}$ for some fragment $Q\in\{A',B',C', D'\}$.
Now $\max\{|S-T|,|T-S|\}=\max\{|S|,|T|\}-|S\cap T|\leq k-1$ since $v\in S\cap T$.
Thus $|Q|\leq\half(k-1)+|Z|$.
By \eqnref{FragmentContainsNeighbour}, there is a vertex $w\in (N_G(v)-Z)\cap Q$.
Then $N_G(w)\subseteq Q\cup S$ or $N_G(w)\subseteq Q\cup T$.
Since $v\in S\cap T\cap Z$ and $|S-\{v\}|\leq k-1$ and $|T-\{v\}|\leq k-1$ and  $w\in Q$, we have
$\deg(w)\leq \half(k-1)+|Z|+(k-1)-1=\frac{3k-5}{2}+|Z|$.
This contradicts the assumption that each vertex in $N_G(v)-Z$ has degree at least $\frac{3}{2}k+|Z|-2$.
\end{proof}


We now show that the degree bound in \thmref{Contractibility} is best possible. The proof is an adaptation of a construction by \citet{Watkins-JCT70} that shows that the degree bound in \thmref{Mader} is best possible. For odd $k\geq5$ and $n\in[4,k-1]$, let $p:=\half(k-1)$. Start with the lexicographic product $C_n\cdot K_p$, which consists of $n$ disjoint copies $H_1,\dots,H_n$ of $K_p$, where every vertex in $H_i$ is adjacent to every vertex in $H_{i+1}$, and $H_j$ means $H_{j\bmod{n}}$. Let $G$ be the graph obtained by adding a new vertex $v$ adjacent to one vertex $w_i$ in each $H_i$, as illustrated in \figref{Construction}. It is straightforward to verify  that there are $k$ internally disjoint paths in $G$ between each pair of distinct vertices in $V(G)-\{v\}$. Thus $N_G(v)$ is the only minimal $(k-1)$-separator in $G$ (since $\deg(v)=n\leq k-1$). For each  neighbour $w_i$ of $v$, observe that  $\deg(w_i)=(p-1)+2p+1=\frac{3}{2}(k-1)$, but in $G/vw$ the set $V(H_i)\cup V(H_{i+2})$ is a $2p$-separator, implying $G/vw_i$ is not $k$-connected. Thus the degree bound of $\frac{3}{2}k-1$ in \thmref{Contractibility} is best possible.

\begin{figure}
\begin{center}
\includegraphics{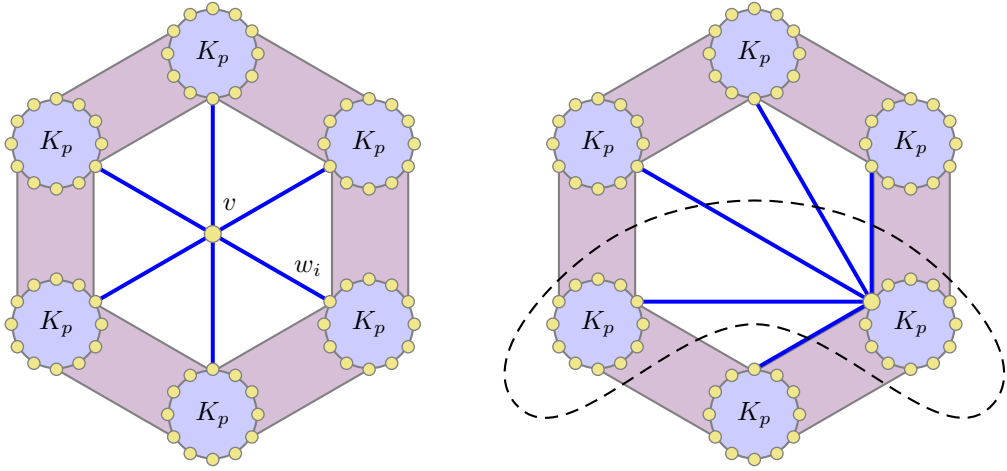}
\end{center}
\caption{\figlabel{Construction} Contracting $vw_i$ produces a $(k-1)$-separation.}
\end{figure}

\section{Final Remarks}

Seymour and Thomas conjectured the following strengthening of \thmref{NT}.

\begin{conjecture}[Seymour and Thomas] 
\conjlabel{Apex}
For each $t\in\mathbb{Z}^+$ there exists $N_t\in\mathbb{Z}^+$ such that every $t$-connected graph $G$ with at least $N_t$ vertices and no $K_t$ minor contains a set $S$ of $t-5$ vertices such that $G-S$ is planar.
\end{conjecture}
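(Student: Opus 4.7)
The plan is to proceed by induction on $t$. The base case $t=5$ reduces to the known fact that every sufficiently large $5$-connected graph with no $K_5$-minor is planar (this follows from Wagner's decomposition of $K_5$-minor-free graphs, since only the planar summands can be large and $5$-connected), in which case the empty apex set $S=\emptyset$ suffices. For $t=6$ the statement is Jorgensen's conjecture, resolved by Kawarabayashi, Norine, Thomas and Wollan, which I would invoke as a second base case. For the inductive step with $t\geq 7$, let $G$ be $t$-connected with at least $N_t$ vertices and no $K_t$-minor. Because $G$ is in particular $(t-1)+1$-connected and arbitrarily large, \thmref{NT} already forces a $K_{t-1}$-minor inside $G$, and one expects an even richer ``nearly $K_t$'' structure to be present.

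The goal is now to exhibit a single vertex $v$ such that $G-v$ is $(t-1)$-connected, still has at least $N_{t-1}$ vertices, and still has no $K_{t-1}$-minor; then induction applied to $G-v$ produces an apex set $S'$ of size $t-6$, and $S:=S'\cup\{v\}$ has size $t-5$. The natural candidates for $v$ are the ``most essential'' vertices of $G$, i.e.\ vertices lying in every minimal $t$-separator of some rich substructure. I would try to locate $v$ using the contractibility machinery of \secref{Contractibility} in its contrapositive form: if no such $v$ existed, then every vertex would be evadable by some separation, and a weighted version of \thmref{Hope} could be iterated to raise the effective connectivity of a large minor of $G$ above $t$, yielding a $K_t$-minor by \thmref{NT} and contradicting the hypothesis.

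The main obstacle---and the reason the conjecture remains open for $t\geq 7$---is the second requirement on $v$: deleting a vertex from a $K_t$-minor-free graph generically does \emph{not} preserve $K_{t-1}$-minor-freeness, because a forbidden $K_{t-1}$-model need not route through the chosen vertex. Overcoming this appears to require the Robertson--Seymour structure theorem for graph classes excluding $K_t$, together with an apex-counting refinement that is currently unavailable. Concretely, the $t$-connectivity of $G$ should force its structural tree decomposition to collapse to a single bag that is nearly embedded in a surface of bounded Euler genus with bounded vortex depth and at most $t-5$ apex vertices; the sharp constant $t-5$ would have to come from balancing the Euler-characteristic cost of excluding $K_t$ against the fact that $K_5$ is the first non-planar complete graph, and from showing that each ``excess'' unit of non-planarity or genus must be paid for by one fewer unit of allowed apex. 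Extracting precisely the constant $t-5$ (rather than some weaker linear function of $t$) is the crux, and mirrors the jump in technical difficulty already witnessed between the proofs of the $t=5$ and $t=6$ cases.
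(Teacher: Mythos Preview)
The statement you are attempting to prove is not a theorem in the paper but an open \emph{conjecture} of Seymour and Thomas; the paper contains no proof of it. The paper only records that the cases $t\leq 6$ are due to Kawarabayashi, Norine, Thomas and Wollan, and that Norine and Thomas have more recently handled $t\leq 8$. There is therefore nothing to compare your proposal against.

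Your write-up is, in any case, not a proof but an honest sketch of why a naive inductive strategy fails. You correctly identify the fatal gap yourself: deleting a vertex $v$ from a $K_t$-minor-free graph need not yield a $K_{t-1}$-minor-free graph, so the induction hypothesis cannot be applied to $G-v$. The appeal to the Robertson--Seymour structure theorem with a hoped-for sharp apex count of $t-5$ is exactly the unresolved heart of the conjecture, and invoking \thmref{Hope} ``in its contrapositive form'' to locate a good $v$ is vague and does not address this obstruction. In short, your proposal accurately explains why the problem is hard rather than solving it; since the paper makes no claim to a proof either, there is no discrepancy to flag beyond the fact that the statement was never meant to be proved here.
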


\citet{KNRWb,KNRWa} proved this conjecture for $t\leq 6$. Recently, \citet{NorineThomas} proved it for $t\leq8$. If true, 
\conjref{Apex} could be used instead of \thmref{NT} to make small improvements to \thmref{Main}.

\smallskip
Given that list colourings are a useful tool in attacking the Hadwiger Conjecture, it is interesting to ask what is the least function $f$ such that every graph with no $K_t$-minor is $f(t)$-choosable. Since every graph with no $K_t$-minor has a vertex of degree at most $ct\sqrt{\log t}$, it follows  that $f(t)\leq ct\sqrt{\log t}$, and this is the best known bound. In particular, the following conjecture is unsolved.

\medskip\noindent\textbf{Weak List Hadwiger Conjecture.}
For some constant $c$, 
every graph with no $K_t$-minor is $ct$-choosable.
\smallskip

\citet{KawaMohar-JCTB07} discuss this conjecture and suggest that it might be true with $c=\frac{3}{2}$. We dare to suggest the following.

\medskip\noindent\textbf{List Hadwiger Conjecture.}
Every graph with no $K_t$-minor is $t$-choosable.
\smallskip

This conjecture is true for $t\leq 5$ \citep{Skrekovski-DM98,HMS-DM08}. The $t=6$ case is open. 

\section*{Acknowledgements}

Thanks to Ga\v{s}per Fijav\v{z}, Vida Dujmovi{\'c}, Matthias Kriesell, and Attila P{\'o}r for helpful discussions.


\begin{thebibliography}{29}
\providecommand{\natexlab}[1]{#1}
\providecommand{\url}[1]{\texttt{#1}}
\providecommand{\urlprefix}{}
\expandafter\ifx\csname urlstyle\endcsname\relax
  \providecommand{\doi}[1]{doi:\discretionary{}{}{}#1}\else
  \providecommand{\doi}{doi:\discretionary{}{}{}\begingroup
  \urlstyle{rm}\Url}\fi

\bibitem[{B{\"o}hme et~al.(ress)B{\"o}hme, ichi Kawarabayashi, Maharry, and
  Mohar}]{BKMM}
\textsc{Thomas B{\"o}hme, Ken ichi Kawarabayashi, John Maharry, and Bojan
  Mohar}.
\newblock Linear connectivity forces large complete bipartite minors.
\newblock \emph{J. Combin. Theory Ser. B}, in press.

\bibitem[{Dirac(1952)}]{Dirac52}
\textsc{Gabriel~A. Dirac}.
\newblock A property of {$4$}-chromatic graphs and some remarks on critical
  graphs.
\newblock \emph{J. London Math. Soc.}, 27:85--92, 1952.

\bibitem[{Erd{\H{o}}s et~al.(1980)Erd{\H{o}}s, Rubin, and Taylor}]{ERT80}
\textsc{Paul Erd{\H{o}}s, Arthur~L. Rubin, and Herbert Taylor}.
\newblock Choosability in graphs.
\newblock In \emph{Proc.\ of the West Coast Conference on Combinatorics, Graph
  Theory and Computing}, vol. XXVI of \emph{Congress. Numer.}, pp. 125--157.
  Utilitas Math., 1980.

\bibitem[{Gonthier(2008)}]{Gonthier-NAMS08}
\textsc{Georges Gonthier}.
\newblock Formal proof---the four-color theorem.
\newblock \emph{Notices Amer. Math. Soc.}, 55(11):1382--1393, 2008.

\bibitem[{Hadwiger(1943)}]{Hadwiger43}
\textsc{Hugo Hadwiger}.
\newblock \"{U}ber eine {K}lassifikation der {S}treckenkomplexe.
\newblock \emph{Vierteljschr. Naturforsch. Ges. Z\"urich}, 88:133--142, 1943.

\bibitem[{He et~al.(2008)He, Miao, and Shen}]{HMS-DM08}
\textsc{Wenjie He, Wenjing Miao, and Yufa Shen}.
\newblock Another proof of the 5-choosability of {$K\sb 5$}-minor-free graphs.
\newblock \emph{Discrete Math.}, 308(17):4024--4026, 2008.

\bibitem[{Kawarabayashi(2008)}]{Kawarabayashi-CPC08}
\textsc{Ken-ichi Kawarabayashi}.
\newblock A weakening of the odd {H}adwiger's conjecture.
\newblock \emph{Combin. Probab. Comput.}, 17:815--821, 2008.

\bibitem[{Kawarabayashi and Mohar(2007)}]{KawaMohar-JCTB07}
\textsc{Ken-ichi Kawarabayashi and Bojan Mohar}.
\newblock A relaxed {H}adwiger's conjecture for list colorings.
\newblock \emph{J. Combin. Theory Ser. B}, 97(4):647--651, 2007.

\bibitem[{Kawarabayashi et~al.(2005)Kawarabayashi, Norine, Thomas, and
  Wollan}]{KNRWa}
\textsc{Ken-ichi Kawarabayashi, Serguei Norine, Robin Thomas, and Paul Wollan}.
\newblock {$K_6$} minors in large $6$-connected graphs of bounded treewidth,
  2005.
\newblock Submitted.

\bibitem[{Kawarabayashi et~al.(2008)Kawarabayashi, Norine, Thomas, and
  Wollan}]{KNRWb}
\textsc{Ken-ichi Kawarabayashi, Serguei Norine, Robin Thomas, and Paul Wollan}.
\newblock {$K_6$} minors in large $6$-connected graphs, 2008.
\newblock In preparation.

\bibitem[{Kostochka(1982)}]{Kostochka82}
\textsc{Alexandr~V. Kostochka}.
\newblock The minimum {H}adwiger number for graphs with a given mean degree of
  vertices.
\newblock \emph{Metody Diskret. Analiz.}, 38:37--58, 1982.

\bibitem[{Kostochka(1984)}]{Kostochka84}
\textsc{Alexandr~V. Kostochka}.
\newblock Lower bound of the {H}adwiger number of graphs by their average
  degree.
\newblock \emph{Combinatorica}, 4(4):307--316, 1984.

\bibitem[{Kriesell(2002)}]{Kriesell-GC02}
\textsc{Matthias Kriesell}.
\newblock A survey on contractible edges in graphs of a prescribed vertex
  connectivity.
\newblock \emph{Graphs Combin.}, 18(1):1--30, 2002.

\bibitem[{Kriesell(2005)}]{Kriesell-CPC05}
\textsc{Matthias Kriesell}.
\newblock Triangle density and contractability.
\newblock \emph{Combin. Probab. Comput.}, 14(1-2):133--146, 2005.

\bibitem[{Mader(1971)}]{Mader-AM71}
\textsc{Wolfgang Mader}.
\newblock Eine {E}igenschaft der {A}tome endlicher {G}raphen.
\newblock \emph{Arch. Math. (Basel)}, 22:333--336, 1971.

\bibitem[{Mader(1988)}]{Mader-DM88}
\textsc{Wolfgang Mader}.
\newblock Generalizations of critical connectivity of graphs.
\newblock \emph{Discrete Math.}, 72(1-3):267--283, 1988.

\bibitem[{Mader(2005)}]{Mader-DM05}
\textsc{Wolfgang Mader}.
\newblock High connectivity keeping sets in graphs and digraphs.
\newblock \emph{Discrete Math.}, 302(1-3):173--187, 2005.

\bibitem[{Norine and Thomas(2008)}]{NorineThomas}
\textsc{Serguei Norine and Robin Thomas}.
\newblock {$K_t$}-minors, 2008.
\newblock Presented at the Banff Graph Minors Workshop.

\bibitem[{Robertson et~al.(1997)Robertson, Sanders, Seymour, and
  Thomas}]{RSST97}
\textsc{Neil Robertson, Daniel~P. Sanders, Paul~D. Seymour, and Robin Thomas}.
\newblock The four-colour theorem.
\newblock \emph{J. Combin. Theory Ser. B}, 70(1):2--44, 1997.

\bibitem[{Robertson et~al.(1993)Robertson, Seymour, and Thomas}]{RST-Comb93}
\textsc{Neil Robertson, Paul~D. Seymour, and Robin Thomas}.
\newblock Hadwiger's conjecture for ${K}\sb 6$-free graphs.
\newblock \emph{Combinatorica}, 13(3):279--361, 1993.

\bibitem[{{\v{S}}krekovski(1998)}]{Skrekovski-DM98}
\textsc{Riste {\v{S}}krekovski}.
\newblock Choosability of {$K\sb 5$}-minor-free graphs.
\newblock \emph{Discrete Math.}, 190(1--3):223--226, 1998.

\bibitem[{Thomason(1984)}]{Thomason84}
\textsc{Andrew Thomason}.
\newblock An extremal function for contractions of graphs.
\newblock \emph{Math. Proc. Cambridge Philos. Soc.}, 95(2):261--265, 1984.

\bibitem[{Thomason(2001)}]{Thomason01}
\textsc{Andrew Thomason}.
\newblock The extremal function for complete minors.
\newblock \emph{J. Combin. Theory Ser. B}, 81(2):318--338, 2001.

\bibitem[{Thomassen(1994)}]{Thomassen-JCTB94}
\textsc{Carsten Thomassen}.
\newblock Every planar graph is {$5$}-choosable.
\newblock \emph{J. Combin. Theory Ser. B}, 62(1):180--181, 1994.

\bibitem[{Toft(1996)}]{Toft-HadwigerSurvey96}
\textsc{Bjarne Toft}.
\newblock A survey of {H}adwiger's conjecture.
\newblock \emph{Congr. Numer.}, 115:249--283, 1996.

\bibitem[{Vizing(1976)}]{Vizing76}
\textsc{V.~G. Vizing}.
\newblock Coloring the vertices of a graph in prescribed colors.
\newblock \emph{Metody Diskret. Analiz}, 29:3--10, 1976.

\bibitem[{Wagner(1937)}]{Wagner37}
\textsc{Klaus Wagner}.
\newblock {\"U}ber eine {E}igenschaft der ebene {K}omplexe.
\newblock \emph{Math. Ann.}, 114:570--590, 1937.

\bibitem[{Watkins(1970)}]{Watkins-JCT70}
\textsc{Mark~E. Watkins}.
\newblock Connectivity of transitive graphs.
\newblock \emph{J. Combinatorial Theory}, 8:23--29, 1970.

\bibitem[{Woodall(2001)}]{Woodall-ListColouringSurvey}
\textsc{Douglas~R. Woodall}.
\newblock List colourings of graphs.
\newblock In \emph{Surveys in combinatorics}, vol. 288 of \emph{London Math.
  Soc. Lecture Note Ser.}, pp. 269--301. Cambridge Univ. Press, 2001.

\end{thebibliography}

\def\soft#1{\leavevmode\setbox0=\hbox{h}\dimen7=\ht0\advance \dimen7
  by-1ex\relax\if t#1\relax\rlap{\raise.6\dimen7
  \hbox{\kern.3ex\char'47}}#1\relax\else\if T#1\relax
  \rlap{\raise.5\dimen7\hbox{\kern1.3ex\char'47}}#1\relax \else\if
  d#1\relax\rlap{\raise.5\dimen7\hbox{\kern.9ex \char'47}}#1\relax\else\if
  D#1\relax\rlap{\raise.5\dimen7 \hbox{\kern1.4ex\char'47}}#1\relax\else\if
  l#1\relax \rlap{\raise.5\dimen7\hbox{\kern.4ex\char'47}}#1\relax \else\if
  L#1\relax\rlap{\raise.5\dimen7\hbox{\kern.7ex
  \char'47}}#1\relax\else\message{accent \string\soft \space #1 not
  defined!}#1\relax\fi\fi\fi\fi\fi\fi} \def\Dbar{\leavevmode\lower.6ex\hbox to
  0pt{\hskip-.23ex \accent"16\hss}D}

\end{document}